\newcommand{\normt}[1]{\|#1\|}
\newtheorem{theorem}{Theorem}[section]
\newtheorem{lemma}{Lemma}[section]
\newtheorem{remark}{Remark}[section]
\newtheorem{example}{Example}[section]
\newtheorem{definition}{Definition}[section]
\newcommand{\normal}[1]{#1^T\!~#1}
\newcommand{\beeq}{\begin{equation}}
\newcommand{\eneq}{\end{equation}}
\newcommand{\mbf}{\mathbf}     %{\textbf{!!!}}
\newcommand{\macheps}{\varepsilon_M}
\begin{document}
\title{Numerical solution of  saddle point problems by block  {Gram--Schmidt} orthogonalization}
\author{ Felicja Okulicka-D{\l}u\.{z}ewska and Alicja Smoktunowicz\thanks{Faculty of Mathematics and Information Science, Warsaw
University of Technology, Koszykowa 75, Warsaw, 00-662
Poland, e-mails:  F.Okulicka@mini.pw.edu.pl, A.Smoktunowicz@mini.pw.edu.pl}}
\maketitle

\begin{abstract}
Saddle point problems arise in many important practical applications. 
In this paper we propose and analyze some algorithms for solving symmetric saddle point problems 
which are based upon the block Gram-Schmidt method. 
In particular, we prove that the algorithm BCGS2 (Reorthogonalized Block Classical Gram-Schmidt) using Householder Q-R decomposition 
implemented in floating point arithmetic is backward stable, under a mild assumption on the matrix $M$. 
This means that the computed vector $\tilde z$  is the exact solution to a slightly perturbed linear system of equations $Mz = f$. 
\end{abstract}

\medskip

\noindent {\large\bf Keywords:} saddle point problem, block Q-R factorization, Householder transformation, condition number, numerical stability.

\medskip

\noindent {\large\bf AMS Subj. Classification:} 15A12, 15A23, 15A60, 65H10.

\section{Introduction}

We consider a symmetric saddle point problem
\begin{equation}\label{M}
M z = f               \Leftrightarrow
\left(
\begin{array}{cc}
A & B \\
B^{T} & -C \\
\end{array}
\right) \,\,
\left(
\begin{array}{c}
x \\
y \\
\end{array}
\right) \,\,
=
\left(
\begin{array}{c}
b \\
c \\
\end{array}
\right),
\end{equation}
where $A \in  \mathbb{R}^{m\times m}$ is symmetric positive definite ($A>0$),   $C \in  \mathbb{R}^{n\times n}$ is symmetric semipositive definite ($C \ge 0$),     $B \in  \mathbb{R}^{m\times n}$ has full column rank, $n=$rank$(B) \leq m$.
Then $M$ is nonsingular and there exists  a unique solution $(x_{*},y_{*})$ of (\ref{M}) (i.e. $M z_{*}=f$) .

These problems feature  in many contexts; for example,  in nonlinearly constrained optimization, structural mechanics,
 computational fluid dynamics, elasticity problems,
  mixed (FE) formulations of II and IV order elliptic PDE's,  
  weighted least squares (image restoration),   FE formulations of
coupled  problem  (see \cite{Benzi: 05}). 
Coupled  problems are common  in the real world; description of
different mechanical phenomena,  such as flow and thermal effects, leads to  coupled systems of differential
equations.  The finite element method (FEM) is widely used to solve such
problems, and the most important part of the finite element method
algorithm is the procedure for  solving the set of linear equations possesing  saddle point structure
(see  \cite{Dlu,Oku1,Oku2}). 

This problem structure is naturally suited to the application of   block algorithms, and  block algorithms are suitable for parallel implementation, as they 
allow the splitting of  data and computation onto separate memories and computation devices.
Block methods  operate on groups of columns of $M$ instead of columns,   
to create a BLAS--3  compatible algorithm, that is, 
an algorithm built upon matrix--matrix operations.

It is known (see \cite{Benzi: 05}) that the block LU factorization of $M$ is in general unstable, and so in this paper we  study the numerical properties of  block Q-R orthogonalization for the solving of (\ref{M}). 
This approach may be a satisfactory  alternative to iterative methods,  where  often the number of iterative steps is unknown,  and further it 
 is not always possible to find a proper preconditioner. 
For a recent account of the theory we refer the reader to \cite{Benzi: 05},   \cite{Arioli: 00} and  \cite{GolGreif}.
 In contrast to  iterative methods, we  prove that the proposed method  is  numerically stable,  under a mild assumption on the matrix $M$.

The paper is organized as follows.  Section $2$ describes  two block Q-R decompositions of the matrix $M$,   BCGS (Block {Gram-Schmidt}) and BCGS2     (Reorthogonalized Block {Gram-Schmidt}).
Section $3$ examines the  numerical stability of these methods when  used  for solving the system (\ref{M}).  Section $4$ is devoted to numerical experiments and comparisons of the methods.

Throughout the paper, $\normt{\cdot}$ denotes the matrix or vector two--norm depending upon context, and $\kappa(M)=  \normt{M^{-1}}     \normt{M} $ is  the standard condition number of $M$.

\section{Algorithms}

We derive  error bounds for the solution of the system (\ref{M}) using the block Q-R methods BCGS (Block Classical {Gram-Schmidt}) 
and BCGS2  (Block Classical {Gram-Schmidt with reorthogonalization}). 
These algorithms produce a Q-R factorization of the matrix $M \in \mathbf{R}^{l \times l}$, where $l=m+n$. 
Then $M=QR$, where  $Q =(Q_1,Q_2) \in   \mathbf{R}^{l \times l}$ is orthogonal ($Q^TQ=I$) 
and $R \in   \mathbf{R}^{l \times l}$ is upper triangular matrix with positive diagonal entries.

Given the block Q-R decomposition of $M$, the solution $z$ to a system (\ref{M}) can be obtained by computing 
first the vector $g=Q^T f$   and then by
 solving the system $R z=g$ with upper triangular matrix $R$ by back-substitution.

At the core of the orthogonalization methods  BCGS and BCGS2  is a  orthogonal factorization routine 
the thin Householder, which for a matrix $X \in \mathbb{R}^{l\times k}$, 
where $l \ge k=rank(X)$,  produces a left orthogonal matrix  $Q  \in \mathbb{R}^{l\times k}$ ($Q^TQ=I$),    
and upper triangular matrix $R  \in \mathbb{R}^{k\times k}$ such that $X=QR$.
In  MATLAB  we use  the statement $[Q,R]=\mbf{qr}(X,0)$. It is well known (see, e.g., \cite{Golub96},  \cite{Bjor96})  that Householder Q-R is 
{\bf unconditionally stable}. In floating-point arithmetic with machine precision $\macheps$,  the thin Householder Q-R produces
the factors $\tilde Q$ and $\tilde R$ such that
\begin{eqnarray}
\normt{ I -\normal{\tilde {Q}} }  &\leq& \macheps \, \, L ,  \label{house1} \\
 X +  \Delta X &=&\tilde {Q} \tilde {R},\quad \normt{\Delta X} \leq  \macheps \,\, L   \normt{X}  \label{house2}
\end{eqnarray}
for some modest constant $L=L(l,k)$. 
Instead of the thin Householder Q-R we can use other stable Q-R factorizations,  for example Givens Q-R (see \cite{Bjor96}, \cite{Golub96}). 

\medskip

\begin{algorithm}[H]
\caption{ Block Classical {Gram--Schmidt} (BCGS)}
\label{BCGS}
\bigskip
\begin{algorithmic}[]
\STATE {}
\STATE {This algorithm computes a  Q-R decomposition  $M=QR \in \mathbf{R}^{l \times l}$. }
\STATE {}
\STATE { \textbf{Input:} $M=\left(\begin{array}{cc}M_1, & M_2 \\\end{array}\right)$, \quad 
$M_1=\left(\begin{array}{cc}A  \\B^{T}  \\\end{array}\right)$, \quad 
$M_2=\left(\begin{array}{cc}B  \\-C  \\\end{array}\right).$}
\STATE {}

\STATE {\textbf{Output:} $Q \in \mathbf{R}^{l \times l}$, $Q$ orthogonal. }
\STATE {\textbf{Output:} $R \in \mathbf{R}^{l \times l}$, $R$ upper triangular. }
\STATE {}
\bigskip
\begin{itemize}
\item  {$M_1=Q_1R_1$ \{the thin Householder Q-R\}  } 
%$Q_1 \in \mathbf{R}^{l\times m}$, $Q_1^TQ_1=I_m$, $R_1 \in \mathbf{R}^{m \times m}$  }
\item{$S=Q_1^TM_2$}
\item {$Y=M_2-Q_1S$}
\item {$Y=Q_2R_2$  \{the thin Householder Q-R\}}
\item {$Q=\left(\begin{array}{cc}Q_1, & Q_2 \\\end{array}\right)$, \quad
 $R=\left(\begin{array}{cc}R_1 & S  \\0 & R_2 \\\end{array}\right)$}
\end{itemize}
\end{algorithmic}
\end{algorithm}

Now we present the  Reorthogonalized Block Gram--Schmidt (BCGS2)  method, which is a generalization of the classical {Gram-Schmidt}
 method with reorthogonalization (CGS2),   first analysed by Abdelmalek (see  \cite{Abd71}, \cite{leon}).

\begin{algorithm}[H]
\caption{ Reorthogonalized Block Classical {Gram--Schmidt} (BCGS2)}
\label{BCGS2}
\bigskip
\begin{algorithmic}[]
\STATE {}
\STATE {This algorithm computes a  Q-R decomposition  $M=QR \in \mathbf{R}^{l \times l}$. }
\STATE {}
\STATE { \textbf{Input:} $M=\left(\begin{array}{cc}M_1, & M_2 \\\end{array}\right)$, \quad 
$M_1=\left(\begin{array}{cc}A  \\B^{T}  \\\end{array}\right)$, \quad 
$M_2=\left(\begin{array}{cc}B  \\-C  \\\end{array}\right).$}
\STATE {}

\STATE {\textbf{Output:} $Q \in \mathbf{R}^{l \times l}$, $Q$ orthogonal. }
\STATE {\textbf{Output:} $R \in \mathbf{R}^{l \times l}$, $R$ upper triangular. }
\STATE {}
\bigskip
\begin{itemize}
\item  {$M_1=Q_1R_1$ \{the thin Householder Q-R\}  } 
%$Q_1 \in \mathbf{R}^{l\times m}$, $Q_1^TQ_1=I_m$, $R_1 \in \mathbf{R}^{m \times m}$  }
\item{$S_1=Q_1^TM_2$}
\item {$Y_1=M_2-Q_1S_1$ }
\item {$Y_1=Q_2R_2$  \{the thin Householder Q-R\}}
\item {Reorthogonalization}
\begin{itemize}
\item{$S_2=Q_1^TQ_2$}
\item {$Y_2=Q_2-Q_1S_2$}
\item {$Y_2=Q_2^{(new)}\bar{R_2}$  \{the thin Householder Q-R\}}
\end{itemize}
\item  $S^{(new)}=S_1+S_2 R_2$
\item  $R_2^{(new)}=\bar{R_2} R_2$
\item {$Q=\left(\begin{array}{cc}Q_1, & Q_2^{(new)} \\\end{array}\right)$, \quad
 $R=\left(\begin{array}{cc}R_1 & S^{(new)}  \\0 & R_2^{(new)} \\\end{array}\right)$}
\end{itemize}
\end{algorithmic}
\end{algorithm}

\begin{remark}
Notice that, since  $M_2=Q_1 S_1 +Q_2 R_2$ and $Q_2=Q_1 S_2 +  Q_2^{(new)} \bar{R_2} $, we have  
\[
M_2= Q_1 (S_1 +S_2 R_2) +  Q_2^{(new)} \bar{R_2} R_2 = Q_1 S^{(new)} +  Q_2^{(new)}  R_2^{(new)}.\]

This leads to the Q-R decomposition
\[
M=(M_1, M_2)= (Q_1, Q_2^{(new)}) \left(\begin{array}{cc}R_1 & S^{(new)}  \\0 & R_2^{(new)} \\\end{array}\right).
\]

It is clear that in the  theory these two methods BCGS and BCGS2 are equivalent; however, their numerical properties are different. This is highlighted  in our numerical experiments in Section $4$.
\end{remark}

\section{Error analysis}

We prove that the algorithm BCGS2 (Reorthogonalized Block Classical {Gram-Schmidt}), using the thin Householder Q-R
decomposition,  satisfying (\ref{house1})-(\ref{house2}),    implemented in floating point arithmetic, is backward  stable as a method of solving linear system of equations (\ref{M}) under natural conditions.
 This  means that the computed vector $\tilde z$  is the exact  solution to a slightly perturbed  system $Mz=f$. 
The precise definition is as follows.
\begin{definition}\label{backward stability}
An algorithm for solving nonsingular system of equations  $M z = f$, where $M\in \mathbb R^{l \times l}$, 
and $f \in \mathbb R^{l}$, is  {\bf \emph{backward stable}}, if the computed
result $\tilde z$ in floating point arithmetic  with machine precision  $\macheps$  satisfies
\begin{equation}\label{bstab}
(M+  \Delta M) \tilde z = f + \Delta f, \quad \normt{\Delta M}   \leq  \macheps c_1 \normt{M},
\quad \normt{\Delta f} \leq \macheps c_2  \normt{f},
\end{equation}
where $c_i=c_i(l)$ ($i=1,2$)  are small constants  depending upon $l$.
\end{definition}

\begin{definition}\label{forward stability}
An algorithm for solving nonsingular system of equations $M z = f$, where $M\in \mathbb R^{l \times l}$, 
and $f \in \mathbb R^{l}$, is  {\bf \emph{forward  stable}}, if the computed
result $\tilde z \neq 0$ in floating point arithmetic  satisfies
\begin{equation}\label{fstab}
\frac{\normt{\tilde z- z_{*}}}{\normt{z_{*}}} \leq  \macheps c_3  \kappa(M),
\end{equation}
where $\kappa(M) = \normt{M^{-1}} \normt{M}$ is
the  condition number of $M$, $c_3=c_3(l)$ is a small constant depending
upon $l$, and $z_{*}$ denotes the exact solution to $Mz=f$.
\end{definition}

It is well known  that backward stability implies forward
stability. However, opposite implication is not true (for examples for  problem (\ref{M}), see
Section 4).

We turn now to the issue of stability of algorithms for solving nonsingular system $Mz=f$ using  the Q-R decomposition of the matrix $M$. We assume that if $M=QR$,  then the solution of the linear system of equations $Mz=f$ is obtained  from the triangular system  $Rz=Q^Tf$.   In floating point arithmetic the computed $\tilde Q$ is not exactly orthogonal. How does departure $\tilde Q$ from the orthogonality influence  the computed solution $\tilde z$? 

We begin  with the following useful lemma.

\begin{lemma}\label{Lemat1}
Assume that $\tilde Q \in \mathbb{R}^{l\times l} $  satisfies 
\begin{equation}\label{eqs300}
\normt{I- \tilde Q^T \tilde Q}  \leq  \beta <  1.
\end{equation}

Then $\tilde Q$ is nonsingular and  we have 
\begin{equation}\label{eqs301}
\normt{\tilde Q} \leq \sqrt{1+\beta},
\end{equation}
\begin{equation}\label{eqs302}
\normt{\tilde Q^{-1}} \leq 1/ \sqrt{1-\beta},
\end{equation}
\begin{equation}\label{eqs303}
\normt{I - \tilde Q \tilde Q^T}  \leq \beta.
\end{equation}
\end{lemma}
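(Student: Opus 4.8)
The plan is to reduce all three estimates to statements about the singular values of $\tilde Q$, since hypothesis (\ref{eqs300}) is precisely a bound on how far those singular values sit from $1$. I would write the singular value decomposition $\tilde Q = U \Sigma V^T$ with $U, V \in \Real^{l \times l}$ orthogonal and $\Sigma = \mathrm{diag}(\sigma_1, \dots, \sigma_l)$, $\sigma_i \ge 0$. Then $\tilde Q^T \tilde Q = V \Sigma^2 V^T$, so $I - \tilde Q^T \tilde Q = V (I - \Sigma^2) V^T$ is an orthogonal similarity and hence $\normt{I - \tilde Q^T \tilde Q} = \max_i |1 - \sigma_i^2|$. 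Thus (\ref{eqs300}) reads $|1 - \sigma_i^2| \le \beta$ for every $i$, i.e. $\sigma_i^2 \in [\,1 - \beta,\, 1 + \beta\,]$.

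With this normalization in hand the first three assertions are immediate. Since $\beta < 1$ we get $\sigma_i^2 \ge 1 - \beta > 0$, so every $\sigma_i$ is strictly positive and $\tilde Q$ is nonsingular. The norm bounds follow from $\normt{\tilde Q} = \max_i \sigma_i \le \sqrt{1+\beta}$, giving (\ref{eqs301}), and from $\normt{\tilde Q^{-1}} = 1/\min_i \sigma_i \le 1/\sqrt{1-\beta}$, giving (\ref{eqs302}). (As a sanity check, (\ref{eqs301}) can also be obtained without the SVD from $\normt{\tilde Q}^2 = \normt{\tilde Q^T \tilde Q} \le 1 + \normt{I - \tilde Q^T \tilde Q} \le 1 + \beta$.)

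For the last estimate (\ref{eqs303}) I would exploit that $\tilde Q \tilde Q^T = U \Sigma^2 U^T$, so $I - \tilde Q \tilde Q^T = U (I - \Sigma^2) U^T$ and therefore $\normt{I - \tilde Q \tilde Q^T} = \max_i |1 - \sigma_i^2| = \normt{I - \tilde Q^T \tilde Q} \le \beta$. This is the one place I expect to need care, and it is the real content of the lemma: the equality $\normt{I - \tilde Q \tilde Q^T} = \normt{I - \tilde Q^T \tilde Q}$ relies on $\tilde Q \tilde Q^T$ and $\tilde Q^T \tilde Q$ sharing the same eigenvalue multiset $\{\sigma_i^2\}$, which holds because $\tilde Q$ is square. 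For a rectangular $\tilde Q$ the two Gram matrices have different sizes and differ precisely in their zero eigenvalues, and the identity would break; squareness makes them orthogonally similar and collapses (\ref{eqs303}) to the same spectral computation as the others. The remaining steps are routine once the SVD is written down.
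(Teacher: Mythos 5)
Your proof is correct, and it takes a genuinely different route from the paper. The paper never writes down a decomposition of $\tilde Q$: it sets $F = I - \tilde Q^T \tilde Q$ and obtains (\ref{eqs301}) from the triangle inequality $\normt{\tilde Q}^2 = \normt{I-F} \le 1 + \normt{F}$, obtains nonsingularity and (\ref{eqs302}) from the Neumann-series bound $\normt{(I-F)^{-1}} \le 1/(1-\normt{F})$, and proves (\ref{eqs303}) by the similarity $I - \tilde Q \tilde Q^T = \tilde Q^{-T}(I - \tilde Q^T \tilde Q)\tilde Q^{T}$ together with the fact that for symmetric matrices the two-norm equals the spectral radius, which is a similarity invariant. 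Your SVD argument replaces all of this with a single spectral computation: hypothesis (\ref{eqs300}) becomes $\sigma_i^2 \in [1-\beta, 1+\beta]$, and each claim is read off from the diagonal form. What your version buys is uniformity and sharpness --- all three bounds are visibly attained by $\sigma_{\max}$ and $\sigma_{\min}$ --- plus an explicit explanation of where squareness enters: for rectangular $\tilde Q$ the matrix $I - \tilde Q \tilde Q^T$ picks up eigenvalue $1$ from the null space of $\tilde Q^T$ and (\ref{eqs303}) fails, a point the paper's similarity argument uses silently (it needs $\tilde Q^{-T}$ to exist). What the paper's version buys is economy of machinery: it needs no existence theorem for the SVD, only submultiplicativity, the triangle inequality, and the invertibility of $I-F$ for $\normt{F}<1$, and its spectral-radius step is exactly your observation that $\tilde Q^T \tilde Q$ and $\tilde Q \tilde Q^T$ share eigenvalues, phrased without a decomposition. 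The two proofs rest on the same underlying fact; yours makes it explicit, the paper's keeps it implicit and stays elementary.
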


\begin{proof}
Denote $F=I- \tilde Q^T \tilde Q$. Then we get
\[
\normt{\tilde Q}^2=\normt{\tilde Q^T \tilde Q}= \normt{I-F} \leq \normt{I}+ \normt{F} \leq 1+ \beta,\] 
so (\ref{eqs301}) holds. Since $\normt{F} <1$  we conclude that $I-F $ is nonsingular. Thus, the matrix $\tilde Q$  also is nonsingular. 

Notice that $\normt{\tilde Q^{-1}}^2= \normt{(I-F)^{-1}} \leq 1/(1- \normt{F}) \leq 1/(1-\beta)$, and    (\ref{eqs302}) is proved. 

We observe that $I - \tilde Q \tilde Q^T = \tilde Q^{-T} (I- \tilde Q^T \tilde Q)\tilde Q^{T}$. We see that 
\[
\normt{I - \tilde Q \tilde Q^T}= \rho ({I - \tilde Q \tilde Q^T})= \rho({I - \tilde Q^T \tilde Q})= \normt{I- \tilde Q^T \tilde Q},
\]
where $\rho(.)$ denotes the spectral radius. This together with (\ref{eqs300}) completes the proof of (\ref{eqs303}). 
\end{proof}

\begin{theorem}\label{tw1}
Let $M\in \mathbb R^{l \times l}$ be nonsingular and suppose that   $\tilde  Q\in \mathbb R^{l \times l}$ and  
 $\tilde R \in  \mathbb R^{l \times l}$ satisfy
\begin{equation}\label{eqs10a}
\normt{ M- \tilde Q  \tilde  R }  \leq  \alpha  \normt{M }, \quad \alpha \kappa(M) < 1
\end{equation}
and
\begin{equation}
\normt{ I  -\normal{\tilde Q} }  \leq  \beta <1. \label{eqs10b}
\end{equation}

Moreover, assume that there exist $\Delta R$ and $\Delta Q$ such that 
\begin{equation}\label{eqs11}
(\tilde R + \Delta R) \tilde z = (\tilde Q + \Delta Q)^T f, 
\end{equation}
where 
\begin{equation}\label{eqs12}
\normt{\Delta R} \leq \gamma \normt{\tilde R}, \quad \normt{\Delta Q} \leq \delta \normt{\tilde Q}.
\end{equation}

Then we have 
\begin{equation}\label{eqs13}
(M+\Delta M) \tilde z = f+ \Delta f,\quad \normt{\Delta M} \leq \mu \normt{M},  \quad \normt{\Delta f} \leq \nu \normt{f},
\end{equation}
 where  
\begin{equation}\label{eqs14}
\mu= \alpha+\gamma (1+ \alpha) \sqrt{(1+\beta)/(1-\beta)}, \quad \nu= \beta + \gamma (1+\beta).
\end{equation}
\end{theorem}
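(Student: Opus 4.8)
The plan is to turn the computed triangular-solve relation (\ref{eqs11}) into a backward-error statement for the full system $Mz=f$ by left-multiplying through by $\tilde Q$ and then trading $\tilde Q \tilde R$ for $M$ and $\tilde Q \tilde Q^T$ for $I$, collecting the leftover terms into $\Delta M$ and $\Delta f$. To this end I introduce the residual matrices $E = M - \tilde Q \tilde R$ and $G = I - \tilde Q \tilde Q^T$. Hypothesis (\ref{eqs10a}) gives $\normt{E} \leq \alpha \normt{M}$, while part (\ref{eqs303}) of Lemma \ref{Lemat1} (applicable since $\beta<1$) gives $\normt{G} \leq \beta$.

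First I would left-multiply (\ref{eqs11}) by $\tilde Q$ to obtain $\tilde Q(\tilde R + \Delta R)\tilde z = \tilde Q(\tilde Q + \Delta Q)^T f$. Substituting $\tilde Q \tilde R = M - E$ on the left-hand side and $\tilde Q \tilde Q^T = I - G$ on the right-hand side, and rearranging, yields the exact identity
\[
(M - E + \tilde Q \Delta R)\,\tilde z = f - G f + \tilde Q \Delta Q^T f .
\]
This exhibits the perturbations in the form (\ref{eqs13}) with the natural choices $\Delta M = -E + \tilde Q \Delta R$ and $\Delta f = -G f + \tilde Q \Delta Q^T f$. The remainder of the proof is then a sequence of norm estimates.

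For $\Delta M$, the triangle inequality and (\ref{eqs12}) give $\normt{\Delta M} \leq \normt{E} + \normt{\tilde Q}\normt{\Delta R} \leq \alpha \normt{M} + \gamma \normt{\tilde Q}\normt{\tilde R}$, so everything hinges on a bound for $\normt{\tilde R}$. This is the step I expect to be the crux: since $\tilde Q$ is nonsingular by Lemma \ref{Lemat1}, I would write $\tilde R = \tilde Q^{-1}(M - E)$ and use (\ref{eqs302}) together with $\normt{M-E}\leq(1+\alpha)\normt{M}$ to get $\normt{\tilde R} \leq (1+\alpha)\normt{M}/\sqrt{1-\beta}$; combined with $\normt{\tilde Q}\leq\sqrt{1+\beta}$ from (\ref{eqs301}) this produces $\normt{\Delta M} \leq [\alpha + \gamma(1+\alpha)\sqrt{(1+\beta)/(1-\beta)}]\normt{M} = \mu\normt{M}$, exactly as in (\ref{eqs14}). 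The estimate for $\Delta f$ follows the same pattern: $\normt{\Delta f} \leq \normt{G}\normt{f} + \normt{\tilde Q}\normt{\Delta Q}\normt{f} \leq \beta\normt{f} + \delta(1+\beta)\normt{f}$, which is $\nu$ of the stated form in (\ref{eqs14}) (with the relative perturbation level of $\tilde Q$ playing the role of the constant there). Finally, I note that the standing assumption $\alpha\kappa(M)<1$ is what guarantees $\tilde z$ is well defined in the first place: from $\tilde Q \tilde R = M(I - M^{-1}E)$ with $\normt{M^{-1}E}\leq\alpha\kappa(M)<1$, the factor $\tilde R$ is invertible, so the triangular system in (\ref{eqs11}) has a unique solution.
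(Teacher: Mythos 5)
Your proof is correct and follows essentially the same route as the paper's own: the same residuals $E=M-\tilde Q\tilde R$ and $G=I-\tilde Q\tilde Q^T$ (the paper calls it $F$), the same choices $\Delta M=\tilde Q\Delta R-E$ and $\Delta f=(\tilde Q\tilde Q^T-I)f+\tilde Q\Delta Q^Tf$, and the same key bound $\normt{\tilde R}\leq(1+\alpha)\normt{M}/\sqrt{1-\beta}$ obtained from $\tilde R=\tilde Q^{-1}(M-E)$ and Lemma \ref{Lemat1}. Your remark about the constant in $\nu$ is also exactly what the paper's proof delivers: it too arrives at $\beta+\delta(1+\beta)$, so the $\gamma$ appearing in (\ref{eqs14}) is evidently a typo for $\delta$.
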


\begin{proof}
Let   $E=M- \tilde Q \tilde R$ and $F=I- \tilde Q \tilde Q^T$. Since $\alpha \kappa(M) <1$ we see that $M-E= \tilde Q \tilde R$ is nonsigular. Thus, $\tilde Q$ and $\tilde R$ also are nonsingular. 
Multiplying  (\ref{eqs11}) by $\tilde Q$ gives  the identity $\tilde Q  (\tilde R + \Delta R) \tilde z = \tilde Q (\tilde Q + \Delta Q)^T f$, 
which we rewrite as follows
\[
(M-E+ \tilde Q \Delta R) \tilde z = f+ ((\tilde Q \tilde Q^T-I) + \tilde Q \Delta Q^T) f.
\]

Let  us introduce  
\begin{equation}\label{eqs15}
\Delta M=   \tilde Q \Delta R - E, \quad  \Delta f= ((\tilde Q \tilde Q^T-I) + \tilde Q \Delta Q^T) f.
\end{equation}

Then $(M+ \Delta M) \tilde z = f+ \Delta f$. 
 It remains to prove that $\Delta M$ and $\Delta f$ satisfy  (\ref{eqs13})-(\ref{eqs14}).  

Taking norms in (\ref{eqs15}), we get
\begin{equation}
\normt{\Delta M} \leq  \normt{E}+ \normt{\tilde Q} \normt{\Delta R}, \quad  \normt{\Delta f} \leq  \normt{I- \tilde Q \tilde Q^T}+ \normt{\Delta Q} \normt{\tilde Q}.
\end{equation}
This together with    (\ref{eqs10a}) and (\ref{eqs12}) gives

\begin{equation}\label{eqs16}
\normt{\Delta M} \leq \alpha  \normt{M}+ \gamma \normt{\tilde Q} \normt{\tilde R}, \quad  \normt{\Delta f} \leq  \normt{I- \tilde Q \tilde Q^T}+ \delta {\normt{\tilde Q}}^2.  
\end{equation}    

Notice that $M-E= \tilde Q \tilde R$, hence $\tilde R= {\tilde Q}^{-1} (M-E)$. Taking norms, we obtain 
\[
\normt{\tilde R} \leq \normt{\tilde Q^{-1}} \normt{M -E} \leq (1+\alpha) \normt{\tilde Q^{-1}} \normt{M}.
\]

This together with  (\ref{eqs16}) and Lemma \ref{Lemat1} gives  (\ref{eqs13})-(\ref{eqs14}).  This  completes the proof.
\end{proof}

Now we apply the results on the numerical properties of the Q-R factorization. 
The detailed error analysis of Algorithms $1$-$2$ was given in \cite{AlaJesse}. 
In particular,  J~.L.~Barlow and A.~Smoktunowicz proved  the following theorem.

\begin{theorem}\label{tw2} 
Let  $M=\left(\begin{array}{cc}M_1, & M_2 \\\end{array}\right)$, where $M_1 \in   \mathbb R^{(m+n) \times m}$,
$M_2 \in   \mathbb R^{(m+n) \times n}$.    
Assume that $M$ is nonsingular.
Then  Algorithm $1$ (BCGS), implemented in floating point arithmetic with machine precision  $\macheps$,
  produces computed $\bar  Q= (\bar Q_1, \bar Q_2)$ and 
$\bar R =  \left(\begin{array}{cc} \bar R_1 & \bar S  \\0 & \bar R_2 \end{array}\right) $ that satisfy  
\begin{equation}
\normt{ M-\bar Q \bar  R }  \leq  \macheps L_1(m,n)  \normt{M }, \\  \label{eqs1}
\end{equation}
and
\begin{equation}
\normt{ I  -\normal{\bar  Q} }  \leq  \macheps L_2(m,n)  \, \normt{M_2} \normt{\bar R_2^{-1}}. \label{eqs2}
\end{equation}

 Algorithm $2$ (BCGS2)  produces computed $\tilde Q$ and $\tilde R$ such that 
\begin{equation}
\normt{ M-\tilde Q \tilde R } \leq  \macheps L_3(m,n)  \normt{M }.   \label{eqs3}
\end{equation}

Moreover, if 
\begin{equation}
\macheps L_4(m,n)  \, \normt{M_2} \normt{\bar R_2^{-1}} < 1, \label{eqs4}
\end{equation}
then  
\begin{equation}
\normt{ I  -\normal{\tilde Q} } \leq \macheps L_5(m,n),  \label{eqs5}
\end{equation}
where $L_i(\cdot)$ ($i=1, \ldots, 5$)  are modestly  growing function on  $m$ and $n$.
\end{theorem}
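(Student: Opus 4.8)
The plan is to propagate rounding errors through Algorithms $1$ and $2$ step by step, applying the backward-stable Householder model (\ref{house1})-(\ref{house2}) at each thin Q-R step together with the standard first-order bounds for the inner products and subtractions, and then to assemble the local estimates into the four global bounds. For the factorization accuracy of BCGS (\ref{eqs1}) I would treat the two block columns in turn. The first Householder step yields $\bar Q_1, \bar R_1$ with $M_1 + \Delta M_1 = \bar Q_1 \bar R_1$, $\normt{\Delta M_1} \leq \macheps L \normt{M_1}$ and $\normt{I - \bar Q_1^T \bar Q_1} \leq \macheps L$; the computed projection obeys $\bar S = \bar Q_1^T M_2 + \Delta S_1$, the residual $\bar Y = M_2 - \bar Q_1 \bar S + \Delta Y_1$, and the second Householder step $\bar Y + \Delta Y_2 = \bar Q_2 \bar R_2$, all three perturbations being of order $\macheps \normt{M_2}$. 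Eliminating $\bar Y$ gives $M_2 = \bar Q_1 \bar S + \bar Q_2 \bar R_2 + E_2$ with $\normt{E_2} = O(\macheps \normt{M_2})$; stacking this with the first column and using $\normt{M_1}, \normt{M_2} \leq \normt{M}$ produces (\ref{eqs1}).

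The orthogonality bound (\ref{eqs2}) is where the conditioning enters, and it rests on the block structure of $I - \normal{\bar Q}$. By (\ref{house1}) the diagonal blocks $I - \bar Q_1^T \bar Q_1$ and $I - \bar Q_2^T \bar Q_2$ are each of order $\macheps L$, so the estimate is governed by the off-diagonal block $\bar Q_1^T \bar Q_2$. Combining the relations for $\bar S$ and $\bar Y$ gives
\[
\bar Q_1^T \bar Y = (I - \bar Q_1^T \bar Q_1)\,\bar Q_1^T M_2 - \bar Q_1^T \bar Q_1\,\Delta S_1 + \bar Q_1^T \Delta Y_1,
\]
whence $\normt{\bar Q_1^T \bar Y} = O(\macheps \normt{M_2})$. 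Since $\bar Q_2 = (\bar Y + \Delta Y_2)\bar R_2^{-1}$, it follows that $\bar Q_1^T \bar Q_2 = (\bar Q_1^T \bar Y + \bar Q_1^T \Delta Y_2)\bar R_2^{-1}$, and therefore $\normt{\bar Q_1^T \bar Q_2} = O(\macheps \normt{M_2}\,\normt{\bar R_2^{-1}})$, which is the claimed bound.

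For BCGS2 the factorization bound (\ref{eqs3}) follows the same error-propagation scheme applied to the algebraic identity $M_2 = Q_1 S^{(new)} + Q_2^{(new)} R_2^{(new)}$ recorded in the Remark, now carrying the extra rounding from the reorthogonalization quantities $S_2 = Q_1^T Q_2$, $Y_2 = Q_2 - Q_1 S_2$ and from the products $S^{(new)} = S_1 + S_2 R_2$, $R_2^{(new)} = \bar R_2 R_2$. Because each of these contributes only an $O(\macheps \normt{M})$ residual, the analysis underlying (\ref{eqs1}) carries over and yields (\ref{eqs3}).

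The main obstacle, and the genuine content of the theorem, is the orthogonality bound (\ref{eqs5}): I must show that a single reorthogonalization damps the off-diagonal block from its condition-dependent size $O(\macheps \normt{M_2}\,\normt{\bar R_2^{-1}})$ down to $O(\macheps)$, independent of the conditioning. The mechanism is the block analogue of the classical ``twice is enough'' phenomenon for CGS2: in exact arithmetic the reprojection $Y_2 = Q_2 - Q_1(Q_1^T Q_2)$ annihilates $Q_1^T Q_2$ exactly, so in floating point the surviving defect is of order $\normt{I - \bar Q_1^T \bar Q_1}\,\normt{S_2}$ together with rounding, that is, a product of two already-small quantities. Assumption (\ref{eqs4}), namely that the first-pass defect $\macheps L_4 \normt{M_2}\,\normt{\bar R_2^{-1}}$ stays below $1$, is precisely what keeps the reprojected columns numerically full rank, so that this cancellation is genuine and is not re-amplified by the final Householder step on $Y_2$. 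Turning this heuristic into a rigorous, condition-free bound with a constant $L_5$ depending only on $m$ and $n$ is the delicate step; the remaining estimates are routine error accounting.
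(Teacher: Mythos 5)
First, note that the paper you were asked to match contains no proof of this theorem at all: it is imported from Barlow and Smoktunowicz \cite{AlaJesse}, where the analysis occupies the bulk of a long paper. So the comparison is really between your attempt and that external analysis. Your treatment of (\ref{eqs1}), (\ref{eqs2}) and (\ref{eqs3}) is sound and follows the standard error-propagation route; in particular, the identity $\bar Q_1^T \bar Y = (I - \bar Q_1^T \bar Q_1)\bar Q_1^T M_2 - \bar Q_1^T \bar Q_1 \Delta S_1 + \bar Q_1^T \Delta Y_1$ combined with $\bar Q_1^T \bar Q_2 = (\bar Q_1^T \bar Y + \bar Q_1^T \Delta Y_2)\bar R_2^{-1}$ is exactly the mechanism by which the condition-dependent factor $\normt{M_2}\,\normt{\bar R_2^{-1}}$ enters (\ref{eqs2}).

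The genuine gap is at (\ref{eqs5}), and you concede it yourself: you present the block ``twice is enough'' mechanism as a heuristic and then state that making it rigorous is ``the delicate step.'' But that step is the entire content of the theorem --- (\ref{eqs1})--(\ref{eqs3}) are routine, and the condition-free bound (\ref{eqs5}) is precisely what distinguishes BCGS2 from BCGS and what \cite{AlaJesse} was written to establish. Concretely, two things are missing. First, from (\ref{eqs4}) you must extract a bound $\normt{\bar S_2} \leq \sigma$ with $\sigma$ strictly less than $1$, and deduce a lower bound on the smallest singular value of the computed $\bar Y_2 = \bar Q_2 - \bar Q_1 \bar S_2$, roughly $\sigma_{\min}(\bar Y_2) \geq \sqrt{1-\sigma^2} - O(\macheps)$, so that the triangular factor produced by the final Householder step applied to $\bar Y_2$ has inverse norm bounded by a modest constant. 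Second, only with that bound in hand can you rerun your off-diagonal argument with $\bar Q_2$ playing the role of $M_2$, which is what collapses the estimate to $O(\macheps)$ independently of $\kappa(M)$. Your phrase ``a product of two already-small quantities'' is also inaccurate: under (\ref{eqs4}) the quantity $\normt{\bar S_2} \approx \normt{\bar Q_1^T \bar Q_2}$ is only $O(1)$ --- it may be close to $1$ --- and the damping comes from multiplying the $O(\macheps)$ non-orthogonality of $\bar Q_1$ by this $O(1)$ quantity, together with the non-amplification guaranteed by the first point. As it stands, your attempt correctly reconstructs the easy parts of the theorem and stops exactly where the real proof begins.
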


\begin{remark}
Notice that  $ \normt{M_2} \leq \normt{M}$ and $ \normt{\bar R_2^{-1}} \leq   \normt{\bar R^{-1}}$, so 
  (\ref{eqs4}) holds  for stronger assumption   that   $\macheps L_6(m,n)   \kappa(M) < 1$, where  $L_6(m,n)$ is modestly growing  function on  $m$ and $n$.  This assumption is both natural and typical for this context.
\end{remark}

\begin{remark}
Applying  the results for the standard methods for solving triangular systems  of equations (see \cite{Golub96}) , we see  that the quantities $\alpha$, $\delta$ and $\gamma$ defined in Theorem \ref{tw1}  are at  the level machine precision $\macheps$.  This together with Theorems \ref{tw1} and \ref{tw2} implies that Algorithm $2$ (BCGS2) is {\bf backward stable}, i.e. (\ref{bstab}) holds. 
However, our numerical tests in Section $4$ show that Algorithm $1$ (BCGS)  is unstable, in general. 
\end{remark}

\section{Numerical experiments}
We present a  comparison of Algorithms $1$ and  $2$. Numerical tests were performed  in \textsl{MATLAB},  with machine precision $\macheps \approx 2.2 \cdot 10^{-16}$.

In our tests we chose $z_{*}$, formed   $f=M*z_{*}$, computed  a Q-R factorization  $M=QR$ using Algorithm $1$ (Block Classical {Gram-Schmidt}- BCGS) and 
Algorithm $2$ (Reorthogonalized Block Classical {Gram-Schmidt}- BCGS2) respectively,
and solved the system of equations $Mz=f$ from the triangular system of equations $Rz=Q^Tf$, using the \textsl{MATLAB}  command: \begin{verbatim} z=R\(Q'*f). \end{verbatim}

We report the following statistics for each algorithm:

\begin{itemize}
\item ${orth}_{Algorithm}= \frac{\|I-\tilde Q^T \tilde Q\|}{\macheps}$ (the orthogonality error),

\item ${dec}_{Algorithm}= \frac{\|M-\tilde Q* \tilde R\|}{\macheps \|M\|}$ (the  decomposition error),

\item ${res}_{Algorithm}= \frac{\|M \tilde z - f \|}{\macheps \|M\| \, \|\tilde z\|}$ (the   backward stability error),

\item ${stab}_{Algorithm}= \frac{\|\tilde z - z_{*} \|}{\macheps \kappa(M) \, \|\tilde z\|}$ (the  forward stability error).
\end{itemize}

\medskip

Note that, in the theory,  ${stab}_{Algorithm} \leq {res}_{Algorithm}$, that is,  the backward stability implies the forward stability.

We consider  several test matrices. 
The function  $\tt {matrix1}$ returns  the matrix $X(m \times n)$, where $m \ge n$,  from  the singular value
 decomposition $X = P D Q$, where $P(m \times  n)$ is left orthogonal ($P^TP=I$)  and
 $Q(n \times n)$ is an  orthogonal matrix. They are generated by the $\tt  {orth}$  subroutine  in  \textsl{MATLAB}.
Here $D(n \times n)$ is a diagonal matrix,  computed as $D=diag(logspace(0,-s,n))$. 
 The   \textsl{MATLAB} command $\tt{logspace(0,-s,n)}$ generates $n$ points between decades $1$ and $10^{-s}$.
Then the condition number of $X$ will be approximately  equal to $\kappa(X)= 10^s$.    

The function  $\tt {matrix2}$ returns  the symmetric positive definite matrix  $X(n \times n)$, 
from the spectral decomposition $X=PDP$, where $P(n \times n)$ is orthogonal and $D$ is diagonal, generated 
in the same way as in the function $\tt {matrix1}$.
%%%%%%%%%%%%%%%%%%%%%%%%%%%%

 The \textsl{MATLAB} codes for these functions are as follows.     
\begin{verbatim}
function X=matrix1(m,n,s);
% X(mxn), cond(X)=10^s.
D=diag(logspace(0,-s,n));
randn('state',0);
P=orth(randn(m)); P=P(:,1:n);
Q=orth(randn(n)); X=P*D*Q';
end
\end{verbatim}
\begin{verbatim}
function X=matrix2(n,s);
% X(nxn),  cond(A)=10^s.
% X is symmetric positive definite. 
D=diag(logspace(0,-s,n));
randn('state',0);
P=orth(randn(n));
A=P*D*P'; A=(A+A')/2;
end
\end{verbatim}

The \textsl{MATLAB} command \verb+randn('state',0)+ is used
to reset the random number generator to its initial state.

In all of our tests we use the scaling to change the condition number of the  matrix $M$ of (\ref{M}).  
More precisely, for given parameter  $0 \neq t \in \mathbb{R}$ and given initial matrices $A_1$, $B_1$ and $C_1$,  we form new matrices as follows:
\begin{equation}\label{At}
A=A_1/t, \quad C=C_1*t,  \quad B=B_1*t.
\end{equation}
Note that the condition numbers  $\kappa(A)$, $\kappa(B)$  and $\kappa(C)$ remain unchanged,  but the condition number of the matrix $M$ of (\ref{M}) can be very large
for a particular $t$.
Then we compute $f=M*z_{*}$, where 
\begin{equation}\label{zt}
z_{*}=(x_{*}^T, y_{*}^T)^T, x_{*}= t(1,1,\ldots,1)^T \in \mathbf{R}^m,  y_{*}=\frac{1}{t} (1,1,\ldots,1)^T \in \mathbf{R}^n.
\end{equation}

\medskip

\begin{example}\label{example1}
Let $A_1=H$, where $H$ is the Hilbert matrix: 
\[
h_{i,j}=\frac{1}{i+j-1},\,\, i, j = 1, \ldots, m.
\]

We take $B_1= \tt{matrix1}(m,n,sB)$, with $m=12$, $n=6$, $sB=10$,  and $C_1=e e^T$, where $e=(1,1,\ldots,1)^T \in \mathbf{R}^n$ and then we form the matrices $A$, $B$, $C$ and the vectors $z_{*}$ and $f$, according to (\ref{At})-(\ref{zt}).

Table $1$  reports all statistics for Algorithms $1$ and $2$.
We observe that in the above example $A$ is very ill-conditioned. We see that Algorithm $1$ produces a backward stable solution only for $t=0.01$, otherwise Algorithm 1 (BCGS) is unstable.
It is evident that Algorithm $2$ is perfectly backward stable.

\begin{table}\label{tabelka1}
\caption{Results for Example $1$ for the computed solutions to $Mz=f$ for $m=12$, $n=6$.
Here $\kappa(A)= 1.67 \cdot 10^{16}$,  $\kappa(B) =  10^{8}$.}

\medskip

\begin{center}
%\begin{tabular}{|c| c| c| c| c| c| c| c|}
\begin{tabular}{|c|c|c|c|c|c|c|c|}
\hline
$t$ &   $0.01$ & $0.1$  & $1$  & $10$ & $100$ \\
\hline $\kappa(M)$ 
&     2.1862e+14&   2.2212e+10&   2.4123e+08 &   2.1104e+10 &   2.1103e+12\\
 \hline $orth_{BCGS}$
 &     1.1085e+10 &   1.3448e+08 &   2.3227e+07 &   2.2281e+07 &   2.8924e+07\\
\hline $orth_{BCGS2}$
&     3.9722&    6.2250&    4.3518&    3.3968&    4.3044\\
\hline $dec_{BCGS}$
&     0.7918&    1.3793&    0.6659&    0.4672&    0.3931\\
\hline $dec_{BCGS2}$ 
 &      0.7918&    1.3793&    0.6873&    0.6873&    0.7554 \\
\hline $res_{BCGS}$  
&      20.6000&   2.0240e+05&   6.3450e+06&   2.5483e+04&   2.1652e+03\\
\hline $res_{BCGS2}$ 
&     6.0151e-04&    0.0654&    1.0473&    0.0274&    0.0371 \\
\hline $stab_{BCGS}$ 
 &    20.6000&   2.0239e+05&   5.4530e+06&   2.2373e+04&   2.0754e+03\\
\hline $stab_{BCGS2}$  
&     6.0608e-05&    0.0113&    0.1755&   6.9555e-04&   3.7342e-05\\
\hline
\end{tabular}
\end{center}
\end{table}
\end{example}

\begin{example}\label{example2}
 We take $A_1=\tt{matrix2}(m,sA)$, $B_1=\tt{matrix1}(m,n,sB)$  and $C_1=\tt{matrix2}(n,sC)$ for large values of $m$ and $n$. Here $sA=sB=sC=10$.
The results are shown in Tables $2$ and $3$.
We see that a more severe instability of Algorithm $1$ than in Example \ref{example1}. Clearly  it depends on  the orthogonality of BCGS.
Algorithm $2$ gives very satisfactory results.
\medskip

\begin{table}
\caption{Results for Example $2$ for the computed solutions to $Mz=f$ for $m=1000$, $n=500$.
Here $\kappa(A)=  \kappa(B) = \kappa(C)= 10^{10}$.} 

\medskip

\begin{center}
\begin{tabular}{|c| c| c| c| c| c| c|c|}
\hline
$t$ &   $0.01$ & $0.1$  & $1$  & $10$ & $100$ \\
\hline $\kappa(M)$ 
&   4.3095e+12&   1.0001e+10&   1.6088e+10&   1.1008e+12&   1.1034e+14 \\
 \hline $orth_{BCGS}$
 &     2.8711e+08&   5.6834e+07&   8.7330e+06&   1.4622e+06&   8.4625e+05 \\
\hline $orth_{BCGS2}$
&   33.4687&   42.9708&   30.4565&   24.1932&   22.9869 \\ 
\hline $dec_{BCGS}$
&    9.8999&   11.6189&    6.7868&    9.0066&   11.2679 \\
\hline $dec_{BCGS2}$ 
 &      9.8999&   11.6189&    6.8005&    9.6463&   12.0870 \\
\hline $res_{BCGS}$  
&     881.9107&   1.9698e+04&   9.5569e+04&   1.2498e+04&   1.1409e+04\\ 
\hline $res_{BCGS2}$ 
&    3.0624e-04&    0.0328&    1.2607&    1.0745&    0.9646 \\
\hline $stab_{BCGS}$ 
 &   751.9935&   1.0728e+04&   79.6715&    0.1522&    5.4143 \\
\hline $stab_{BCGS2}$  
&      1.1538e-05&    0.0037&    0.1044&    0.0317&    0.0332\\
\hline
\end{tabular}
\end{center}
\end{table}

\medskip

\begin{table}
\caption{Results for Example $3$ for the computed solutions to $Mz=f$ for $m=3000$, $n=100$.
Here $\kappa(A)=  \kappa(B)= \kappa(C)= 10^{10}$.} 

\medskip

\begin{center}
\begin{tabular}{|c| c| c| c| c| c| c|c|}
\hline
$t$ &   $0.01$ & $0.1$  & $1$  & $10$ & $100$ \\
\hline $\kappa(M)$ 
&   8.6976e+12&   1.8422e+10&   1.6098e+10&   1.0625e+12&   1.0652e+14\\
 \hline $orth_{BCGS}$
 &       5.4204e+08&   1.1497e+08&   2.4793e+07&   6.2835e+06&   1.8672e+06\\
\hline $orth_{BCGS2}$
&    41.3322&   37.9195&   35.8591&   31.9450&   36.1455\\
\hline $dec_{BCGS}$
&    10.5139&   12.7317&    9.3857&    8.7265&    7.4901\\
\hline $dec_{BCGS2}$ 
 &   10.5139&   12.7317&    9.3861&    8.4227&    7.6466\\
\hline $res_{BCGS}$  
&     630.5847&   3.0521e+04&   1.2111e+05&   1.0972e+04&   5.3586e+03\\ 
\hline $res_{BCGS2}$ 
&     0.0011&    0.1154&    1.3523&    0.4655&    0.4709 \\  
\hline $stab_{BCGS}$ 
 &   293.7614&   2.4108e+04&  443.6943&    0.1661&    6.1348\\
\hline $stab_{BCGS2}$  
&      3.0067e-06&    0.0073&    0.1495&    0.0118&    0.0138\\
\hline
\end{tabular}
\end{center}
\end{table}
\end{example}

\medskip

\section{Conclusions}
We  proposed  and analyzed  two  algorithms  for solving symmetric saddle point problems. 
Although the {Block Classical {Gram--Schmidt} (BCGS is unstable),  the Reorthogonalized Block Classical {Gram--Schmidt} (BCGS2) gives acceptable results in floating point arithmetic.
Algorithm BCGS2 is suitable for parallel implementation.  In order to refine   the numerical solutions of linear systems, one can apply 
 some iterative refinement techniques  (see \cite{smok3} ).

\medskip

\noindent {\bf Acknowledgements.} The authors are  grateful to  Mike Peter West
for the many suggestions, which helped to improve the paper.

\bigskip


\begin{thebibliography}{7}
\bibitem{Abd71}
N.I. Abdelmalek, 
 Roundoff error analysis for {Gram--Schmidt} method and solution of
linear least squares problems,  BIT (1971) 11 (4):354--367.

\bibitem{Arioli: 00} M.~Arioli,  The use of QR factorization in sparse quadratic programming and backward error issues,
SIAM J.Matrix Anal. Appl. 21 (3) (2000)  825--839.
%%

\bibitem{AlaJesse}
J.L. Barlow, A.~Smoktunowicz,
Reorthogonalized block classical {Gram--Schmidt},  Numer. Math.  (2013) 123: 395--423. 
 
\bibitem{Benzi: 05} M.~Benzi, G.~H.~Golub, and J.~Liesen,  Numerical solution of saddle point problems, Acta Numer. 14 (2005) 1--137.
%%

\bibitem{Bjor96}
{\AA}.~Bj{\"o}rck,
Numerical methods for least squares problems,  
SIAM Publications, Philadelphia, PA (1996).


\bibitem {Dlu}
 J.M.   D{\l}u\.{z}ewski,  Nonlinear problems
during consolidation process, in Advanced numerical applications and
plasticity in geomechanics,   D.V. Griffiths, and G. Gioda (Eds.),
Springer Verlag (2001).

%\bibitem{DoDu90}
%J.J. Dongarra, J.J. DuCroz, I.S. Duff, and S.J. Hammarling,
%A set of level 3 basic linear algebra subprograms
%ACM Trans. Math. Software (1990)  16:1--17.


\bibitem{Golub96} G.~H.~Golub and Ch.~F.~Van Loan, Matrix
computations, third edition, The Johns Hopkins University Press,
Baltimore and London (1996).
%%


\bibitem{GolGreif} G.~H.~Golub, C.~Greif,  On solving block-structured indefinite linear systems,
 SIAM J. Sci. Comput.  24 (6) (2003)   2076--2092.

% \bibitem{Hig02} N.J. Higham, 
% Accuracy and Stability of Numerical Algorithms, 
% SIAM Publications, Philadelphia, PA, second edition, 2002.

% \bibitem{HoePhD10}
% M.F. Hoemmen, 
% {\em Communication--avoiding {Krylov} subspace methods},
% PhD thesis, University of California, Berkeley, CA, USA, 2010.

% \bibitem{JaPh91}
% W.~Jalby and B.~Phillippe,
% Stability analysis and improvement of the block {Gram--Schmidt}   algorithm, 
% SIAM J. Sci. Stat. Comput. (1991)  12:1058--1073.

\bibitem{leon} S.~J.~Leon, {\AA}.~ Bj\"{o}rck, and W.~Gander, 
{Gram--Schmidt} orthogonalization: 100 years and more,
Numer. Linear Algebra Appl. (2012) 20 (3)  492--532.
%

\bibitem {Oku1}
F. Okulicka- D{\l}u\.{z}ewska,  High-performance computing in
geomechanics by a parallel finite element approach, Lecture Notes
in Comput.  Sci. 1947 (2001)  391--398


\bibitem {Oku2}
F.Okulicka, Solving coupled consolidation equations,   Lecture Notes in Comput. Sci. 4310 (2007) 100--105.


%\bibitem{Ala08} A.~Smoktunowicz,  Block matrices and symmetric perturbations, Linear Algebra Appl. 429 (2008)  2628--2635.
%%


\bibitem{smok3} A.~Smoktunowicz and A.~Smoktunowicz,  Iterative refinement techniques for solving block linear systems of equations, Applied Numerical Mathematics 67 (2013) 220--229.
%%

%\bibitem{Wilkinson} J.~H.~Wilkinson, {\it The Algebraic Eigenvalue Problem}, Oxford University Press,  1965.

\end{thebibliography}
\end{document}